 \newtheorem{thm}{Theorem}[section]
 \newtheorem{cor}[thm]{Corollary}
 \newtheorem{prop}[thm]{Proposition}
 \theoremstyle{definition}
 \newtheorem{defn}[thm]{Definition}
 \theoremstyle{remark}
 \newtheorem{rem}[thm]{Remark}
 \newtheorem{ex}[thm]{Example}
 \numberwithin{equation}{section}
\newcommand{\NN}{\mathbb{N}}
\begin{document}

\title[A three point extension of Chatterjea's fixed point theorem]{A three point extension of Chatterjea's fixed point theorem with at most two fixed points}


\author[1]{\fnm{Ravindra K.} \sur{Bisht}}\email{ravindra.bisht@yahoo.com}

\author*[2]{\fnm{Evgeniy} \sur{Petrov}}\email{eugeniy.petrov@gmail.com}

\affil[1]{
\orgdiv{Department of Mathematics},
\orgname{National Defence Academy},
\orgaddress{
\city{Pune},
\state{Khadakwasla},
\country{India}}}

\affil*[2]{
\orgdiv{Function Theory Department},
\orgname{Institute of Applied Mathematics and Mechanics of the NAS of Ukraine}, \orgaddress{\street{Batiuka str. 19},
\city{Slovyansk},
\postcode{84116},
\country{Ukraine}}}



\abstract{In this paper, we introduce a new category of mappings within metric spaces, specifically focusing on three-point analogs of the well-established Chatterjea type mappings. We demonstrate that Chatterjea type mappings and their three-point analogs are different classes of mappings. A fixed point theorem for generalized Chatterjea type mappings is established. It is shown that these mappings are continuous at fixed point. Connections between generalized Chatterjea type mappings, generalized Kannan type mappings, Chatterjea type mappings, and mappings contracting perimeters of triangles are found. Additionally, we derive two fixed point theorems for generalized Chatterjea type mappings in metric spaces, even when completeness is not mandatory.}

\keywords{fixed point theorem, Chatterjea type mapping, generalized Chatterjea type mapping, metric space}


\pacs[MSC Classification]{Primary 47H10; Secondary 47H09}

\maketitle

\section{Introduction}
Over the span of more than a century, numerous efforts have been made to extend and generalize Banach's contraction principle. This principle, being a fundamental concept, finds applicability in a wide range of problems within the realms of science and engineering.

In 1968, Kannan \cite{Ka68} proved an interesting fixed point theorem that played a crucial role in advancing the theory of fixed points for generalized contractive mappings. This theorem was quickly followed by numerous papers on contractive mappings. The Kannan fixed point theorem also gave rise to the famous question regarding the continuity of contractive mappings at their fixed points.

In ~\cite{Ch72}, Chatterjea presented a result in 1972 that offers a fixed point for mappings even in cases where they are discontinuous within their domain of definition: let $T\colon X\to X$ be a mapping on a complete metric space $(X,d)$ with
  \begin{equation}\label{e0}
   d(Tx,Ty)\leqslant \gamma (d(x,Ty)+d(y,Tx))
  \end{equation}
where $0\leqslant \gamma<\frac{1}{2}$ and $x,y \in X$. Then $T$ has a unique fixed point.

Similar to Kannan's fixed-point theorem, condition (\ref{e0}) also ensures continuity for the mapping $T$ at the fixed point \cite{Rh88}.  Additionally, the Banach Contraction Principle, Kannan mappings, and Chatterjea mappings are each independent of one another, except for sharing the uniqueness of the fixed point.

In the realm of Fixed Point Theory, distinctions can be made among various categories of generalizations of the Chatterjea theorem. In the first scenario, the contractive nature of the mapping is relaxed, as demonstrated, for instance, in~\cite{C13,Deb21,Kad16,Be07,JKR11,Su18}. The second scenario involves  the relaxation of the topology, as explored, for example, in~\cite{AJS18}. The third case encompasses theorems tailored for Chatterjea-type multivalued mappings, as discussed in works such as~\cite{C19,T22}. Lastly, in the fourth case, nontrivial extensions are considered within a weaker or more extended structure of the metric space, offering a comprehensive exploration of the diverse avenues of generalization in this field \cite{Har11,KI19,Ko17,Ma16,Ber21}.

In 2023, Petrov introduced a new class of mappings, characterized as a three-point analog of the Banach contraction principle, and referred to it as mappings that contract the perimeters of triangles~\cite{P23}.

 \begin{defn}\label{d0}
Let $(X,d)$ be a metric space with $|X|\geqslant 3$. We say that $T\colon X\to X$ is a \emph{mapping contracting perimeters of triangles} on $X$ if there exists $\alpha\in [0,1)$ such that the inequality
  \begin{equation}\label{mcpt}
   d(Tx,Ty)+d(Ty,Tz)+d(Tx,Tz) \leqslant \alpha (d(x,y)+d(y,z)+d(x,z))
  \end{equation}
  holds for all three pairwise distinct points $x,y,z \in X$.
\end{defn}

\begin{rem}
It is crucial to highlight the significance of the condition requiring pairwise distinct elements for $x, y, z \in X$. Without this stipulation, the definition becomes indistinguishable from the Banach contraction principle.
\end{rem}

In metric fixed-point theory, geometric elegance emerges by substituting terms with various combinations of distinct distances, which play a pivotal role. In the context of the six points $x, y, z, Tx, Ty, Tz$ outlined in Definition \ref{d0}, there are $\binom{6}{2}$ possible combinations, resulting in 15 distances when considered two at a time. One such combination of three distances, specifically $d(Tx, Ty) + d(Ty, Tz) + d(Tz, Tx)$, is presented on the left-hand side (LHS) in (\ref{mcpt}). On the other hand, the right-hand side (RHS) expression consists of a combination of three distances, i.e., $d(x, y) + d(y, z) + d(x, z)$. In general, the remaining 12 distances (except $d(Tx, Ty) + d(Ty, Tz) + d(Tz, Tx)$ fixed on LHS) undergo rotation in a manner that ensures the preservation of the symmetric condition of the metric space and provides motivation for various classes of three point analogous mappings, similar to the well-established two-point versions.

Following~\cite{P23}, the authors in~\cite{EB23} presented a three-point analogue of the Kannan type mappings.

\begin{defn}\label{d1}
Let $(X,d)$ be a metric space with $|X|\geqslant 3$. We say that $T\colon X\to X$ is a \emph{generalized Kannan type mapping} on $X$ if there exists $\lambda\in [0,\frac{2}{3})$ such that the inequality
  \begin{equation}\label{gktm}
   d(Tx,Ty)+d(Ty,Tz)+d(Tx,Tz) \leqslant \lambda (d(x,Tx)+d(y,Ty)+d(z,Tz))
  \end{equation}
  holds for all three pairwise distinct points $x,y,z \in X$.
\end{defn}

The generalized Kannan type condition, as expressed in equation (\ref{gktm}), involves substituting the terms $d(x,y) + d(y,z) + d(z,x)$, found in the RHS of the mapping of contracting perimeters of triangles (\ref{mcpt}), with $d(x, Tx) + d(y, Ty) + d(z, Tz)$. It is important to mention that the classes of Kannan type mappings and generalized Kannan type mappings are independent \cite{EB23}.\\

Motivated by the insights from~\cite{P23} and utilizing the remaining six distances among the points $x, y, z, Tx, Ty, Tz$, denoted as $d(x, Ty) + d(x, Tz) + d(y, Tx) + d(y, Tz) + d(z, Tx) + d(z, Ty)$ on the RHS of the generalized Kannan type condition (\ref{gktm}), we introduce a generalized Chatterjea type mapping as follows:

\begin{defn}\label{d2}
Let $(X,d)$ be a metric space with $|X|\geqslant 3$. We shall say that $T\colon X\to X$ is a \emph{generalized Chatterjea type mapping} on $X$ if there exists $\gamma\in [0,\frac{1}{3})$ such that the inequality
  \begin{equation}\label{gctm}
\begin{aligned}
  &d(Tx,Ty) + d(Ty,Tz) + d(Tx,Tz) \\
  &\leqslant \gamma (d(x,Ty) + d(x,Tz) + d(y,Tx) + d(y,Tz) + d(z,Tx) + d(z,Ty))
\end{aligned}
\end{equation}
  holds for all three pairwise distinct points $x,y,z \in X$.
\end{defn}

The fundamental distinction in the definition of generalized Chatterjea type mappings lies in the mapping of three points in space rather than two. Additionally, a requisite condition is imposed to preclude these mappings from possessing periodic points of prime period 2.

In Section~\ref{sec2}, {we explore the connections between generalized Chatterjea type mappings, generalized Kannan type mappings, Chatterjea type mappings,} and mappings contracting perimeters of triangles. Moving to Section~\ref{sec3}, we establish the main result, Theorem~\ref{t1}, which constitutes a fixed-point theorem for generalized Chatterjea type mappings, asserting that the number of fixed points is at most two, and we additionally demonstrate the continuity of these mappings at the fixed points. Finally, in Section~\ref{sec5}, inspired by Kannan's work~\cite{Ka69}, we derive two fixed-point theorems for generalized Chatterjea type mappings, relaxing the completeness requirement for the underlying space.

\section{Some properties of generalized Chatterjea type mappings}\label{sec2}

In this section, we explore the relationship between generalized Chatterjea type mappings, Chatterjea type mappings, mappings contracting perimeters of triangles, and generalized Kannan type mappings.

\begin{prop}\label{p2.1}
Chatterjea  type mappings with $\gamma\in[0,\frac{1}{3})$ are generalized Chatterjea type mappings.
\end{prop}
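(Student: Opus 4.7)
The plan is to deduce the generalized Chatterjea inequality by summing three instances of the ordinary Chatterjea inequality, one for each pair of distinct points in $\{x,y,z\}$.

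Concretely, suppose $T\colon X\to X$ satisfies \eqref{e0} with constant $\gamma\in[0,\tfrac{1}{3})$, and fix three pairwise distinct points $x,y,z\in X$. I would apply \eqref{e0} to the three pairs $(x,y)$, $(y,z)$, and $(x,z)$, obtaining
\begin{align*}
d(Tx,Ty) &\leqslant \gamma\bigl(d(x,Ty)+d(y,Tx)\bigr),\\
d(Ty,Tz) &\leqslant \gamma\bigl(d(y,Tz)+d(z,Ty)\bigr),\\
d(Tx,Tz) &\leqslant \gamma\bigl(d(x,Tz)+d(z,Tx)\bigr).
\end{align*}
Adding these three inequalities, the left-hand side becomes exactly the LHS of \eqref{gctm}, while the right-hand side collects precisely the six cross-distances $d(x,Ty)+d(x,Tz)+d(y,Tx)+d(y,Tz)+d(z,Tx)+d(z,Ty)$ multiplied by $\gamma$. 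Since the same $\gamma$ lies in the admissible range $[0,\tfrac{1}{3})$ for Definition~\ref{d2}, inequality \eqref{gctm} follows immediately.

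There is no genuine obstacle in this argument: the six ``cross'' distances appearing on the RHS of \eqref{gctm} are by construction the union, over the three pairs from $\{x,y,z\}$, of the two terms appearing on the RHS of \eqref{e0}. The only role of the hypothesis $\gamma<\tfrac{1}{3}$ (rather than the usual Chatterjea range $\gamma<\tfrac{1}{2}$) is to place the inherited constant in the window required by Definition~\ref{d2}; pairwise distinctness of $x,y,z$ is automatic from the statement. Thus the proposition reduces to a one-line summation.
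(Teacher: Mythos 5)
Your proof is correct and is essentially identical to the paper's: both apply the Chatterjea inequality \eqref{e0} to the three pairs $(x,y)$, $(y,z)$, $(x,z)$ and sum the results, observing that the constant $\gamma<\tfrac{1}{3}$ is inherited directly. No further comment is needed.
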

\begin{proof}
Let $(X,d)$ be a metric space with $|X|\geqslant 3$, $T\colon X\to X$ be a Chatterjea type mapping and let $x,y,z\in X$ be pairwise distinct. Consider inequality~(\ref{e0}) for the pairs $y, z$ and  $z, x$:
\begin{equation}\label{e01}
   d(Ty,Tz)\leqslant \gamma (d(y,Tz)+d(z,Ty)),
\end{equation}
\begin{equation}\label{e02}
   d(Tz,Tx)\leqslant \gamma (d(z,Tx)+d(x,Tz)).
\end{equation}

Summarizing the left and the right parts of inequalities~(\ref{e0}), ~(\ref{e01}) and~(\ref{e02}) we obtain
\begin{align*}
   d(Tx,Ty)+&d(Ty,Tz)+d(Tz,Tx)\\
&\leqslant \gamma (d(x,Ty)+d(x,Tz)+d(y,Tx)+d(y,Tz)+d(z,Tx)+d(z,Ty)).
\end{align*}
Hence, we get the desired assertion.
\end{proof}

Recall that for a given metric space $X$, a point $x \in X$ is said to be an \emph{accumulation point} of  $X$ if every open ball centered at $x$ contains infinitely many points of $X$.

\begin{prop}\label{p1}
Let $(X,d)$ be a metric space and let  $T\colon X\to X$ be a generalized Chatterjea type metric with some  $\gamma\in [0,\frac{1}{4})$. If $x$ is an accumulation point of  $X$ and $T$ is continuous at $x$, then the inequality
\begin{equation}\label{w1}
 d(Tx,Ty)\leqslant \frac{\gamma}{1-\gamma}\left(2d(x,Ty)+d(y,Tx)\right)
\end{equation}
holds for all points $y\in X$.
\end{prop}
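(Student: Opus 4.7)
The plan is to exploit the accumulation point to manufacture a legitimate triple of pairwise distinct points from just two. Assume without loss of generality that $y \neq x$ (otherwise the conclusion is trivial). Since $x$ is an accumulation point of $X$, choose a sequence $\{x_n\} \subset X \setminus \{x\}$ with $x_n \to x$; since $y \neq x$, we have $x_n \neq y$ for all sufficiently large $n$, so the three points $x, x_n, y$ are pairwise distinct. Then apply the defining inequality~(\ref{gctm}) of a generalized Chatterjea type mapping to this triple.

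Next I would pass to the limit $n \to \infty$. Continuity of $T$ at $x$ gives $Tx_n \to Tx$, while $x_n \to x$ handles the remaining terms. Each of the six distances on the right and the three on the left converges by the continuity of $d$, and the terms involving $d(Tx, Tx_n)$ on the left vanish in the limit. Collecting the surviving terms yields
\begin{equation*}
  2\,d(Tx,Ty) \leqslant \gamma\bigl(2\,d(x,Tx) + 2\,d(x,Ty) + 2\,d(y,Tx)\bigr),
\end{equation*}
that is, $d(Tx,Ty) \leqslant \gamma\bigl(d(x,Tx) + d(x,Ty) + d(y,Tx)\bigr)$.

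Finally, to remove the unwanted term $d(x,Tx)$, I would apply the triangle inequality $d(x,Tx) \leqslant d(x,Ty) + d(Ty,Tx)$ and absorb the resulting $\gamma\, d(Tx,Ty)$ into the left-hand side. Since $\gamma < 1/4 < 1$, division by $1-\gamma$ is legitimate and produces exactly
\begin{equation*}
 d(Tx,Ty) \leqslant \frac{\gamma}{1-\gamma}\bigl(2\,d(x,Ty) + d(y,Tx)\bigr),
\end{equation*}
as required.

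The argument is short; the only mildly delicate point is the bookkeeping in the limit step, namely checking that every one of the nine distance terms in the three-point inequality converges to the claimed value under $x_n \to x$ and $Tx_n \to Tx$. Note that the proof itself only needs $\gamma < 1$; the sharper hypothesis $\gamma < 1/4$ is presumably carried along because it guarantees $\gamma/(1-\gamma) < 1/3$, which is the natural Chatterjea constant and what will be needed in later applications of this proposition.
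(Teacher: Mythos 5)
Your proof is correct and follows essentially the same route as the paper's: use the accumulation point to produce a sequence $z_n\to x$ with $x,y,z_n$ pairwise distinct, apply the three-point inequality~(\ref{gctm}), pass to the limit using continuity of $T$ at $x$ and of the metric, and then eliminate $d(x,Tx)$ via $d(x,Tx)\leqslant d(x,Ty)+d(Ty,Tx)$ and absorb the $\gamma\,d(Tx,Ty)$ term into the left-hand side. Your closing remark is also on point: the derivation only needs $1-\gamma>0$, and the hypothesis $\gamma<\frac14$ is there so that Corollary~\ref{cor1} gets a genuine Chatterjea constant $\frac{3\gamma}{2(1-\gamma)}<\frac12$.
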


\begin{proof}
Let $x\in X$ be an accumulation point and let $y\in X$. If $y=x$, then clearly~(\ref{w1}) holds. Let now $y\neq x$. Since $x$ is an accumulation point, then there exists a sequence $z_n\to x$ such that $z_n\neq x$, $z_n\neq y$ and all $z_n$ are different.
Hence, by~(\ref{gctm}) the inequality
\begin{align*}
   d(Tx,Ty)&+d(Ty,Tz_n)+d(Tx,Tz_n)\\
   &\leqslant \gamma (d(x,Ty)+ d(x,Tz_n)+d(y,Tx)+d(y,Tz_n)+d(z_n,Tx)+d(z_n,Ty))
\end{align*}
holds for all $n\in \NN$. Since $z_n\to x$ and  $T$ is continuous at $x$ we have $Tz_n \to Tx$. Since every metric is continuous we have $d(z_n, Tz_n) \to d(x,Tx)$. Hence, we get
\begin{align*}
   d(Tx,Ty)&+d(Ty,Tx)\\
   &\leqslant \gamma (d(x,Ty)+ d(x,Tx)+d(y,Tx)+d(y,Tx)+d(x,Tx)+d(x,Ty))\\
   &\leqslant 2\gamma (d(x,Ty)+ d(y,Tx)+ d(x,Tx))\\
   &\leqslant 2\gamma (d(x,Ty)+ d(y,Tx)+ d(x,Ty)+d(Ty,Tx)),
\end{align*}

which is equivalent to  ~(\ref{w1}).
\end{proof}

The following corollary establishes a very interesting property of generalized Chatterjea type mappings.

\begin{cor}\label{cor1}
Let $(X,d)$ be a metric space, $T\colon X\to X$ be a continuous generalized Chatterjea type mapping with $\gamma\in [0,\frac{1}{4})$ and let all points of $X$ are accumulation points. Then $T$ is a Chatterjea type mapping.
\end{cor}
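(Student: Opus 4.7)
The plan is to apply Proposition~\ref{p1} twice, once as stated and once with the roles of $x$ and $y$ exchanged, and then average the two inequalities to obtain a symmetric Chatterjea-type bound. Under the hypotheses of the corollary every point of $X$ is an accumulation point and $T$ is continuous everywhere, so Proposition~\ref{p1} applies at every point, giving
\[
 d(Tx,Ty)\leqslant \frac{\gamma}{1-\gamma}\bigl(2d(x,Ty)+d(y,Tx)\bigr)
\]
for all $x,y\in X$. Exchanging $x$ and $y$ (and using symmetry of the metric) yields
\[
 d(Tx,Ty)\leqslant \frac{\gamma}{1-\gamma}\bigl(2d(y,Tx)+d(x,Ty)\bigr).
\]

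Adding these two inequalities and dividing by $2$ would collapse the asymmetry: the combined right-hand side is $\tfrac{\gamma}{1-\gamma}\cdot 3\bigl(d(x,Ty)+d(y,Tx)\bigr)$, so
\[
 d(Tx,Ty)\leqslant \frac{3\gamma}{2(1-\gamma)}\bigl(d(x,Ty)+d(y,Tx)\bigr).
\]
The final check is purely arithmetic: the coefficient $\gamma':=\tfrac{3\gamma}{2(1-\gamma)}$ satisfies $\gamma'<\tfrac12$ precisely when $3\gamma<1-\gamma$, i.e.\ when $\gamma<\tfrac14$, which is exactly the hypothesis. Hence $T$ satisfies the Chatterjea condition~(\ref{e0}) with constant $\gamma'\in[0,\tfrac12)$, as required.

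I do not anticipate any real obstacle here; the corollary is essentially a direct consequence of Proposition~\ref{p1} together with a symmetrization trick, and the numerical range $\gamma<\tfrac14$ is tailored exactly so that the averaged constant still lies in the admissible Chatterjea range. The only subtlety is to notice that the asymmetric factors $2$ and $1$ in~(\ref{w1}) become symmetric $\tfrac{3}{2}$, $\tfrac{3}{2}$ after averaging, which is what forces the threshold $\tfrac14$ rather than $\tfrac13$.
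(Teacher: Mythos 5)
Your proof is correct and follows exactly the paper's own argument: apply Proposition~\ref{p1} twice with the roles of $x$ and $y$ exchanged, average the two inequalities to get the symmetric constant $\frac{3\gamma}{2(1-\gamma)}$, and verify that this lies in $[0,\frac12)$ precisely when $\gamma<\frac14$. Nothing further is needed.
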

\begin{proof}
According to Proposition~\ref{p1}, inequality~(\ref{w1}) holds as well as the inequality
\begin{equation}\label{w2}
 d(Tx,Ty)\leqslant \frac{\gamma}{1-\gamma}\left(2d(y,Tx)+d(x,Ty)\right).
\end{equation}
Summarizing the left and the right parts of ~(\ref{w1}) and ~(\ref{w2}) and dividing both parts of the obtained inequality by $2$ we get
\begin{equation*}
 d(Tx,Ty)\leqslant \frac{3\gamma}{2(1-\gamma)}\left(d(x,Ty)+d(y,Tx)\right).
\end{equation*}
Since $\gamma\in [0,\frac{1}{4
})$, we have $\frac{3\gamma}{2(1-\gamma)} \in [0,\frac{1}{2})$, which completes the proof.
\end{proof}

\begin{prop}
Let $(X, d)$ be a metric space with $|X| \geq 3$, and $T\colon X \to X$ be a mapping contracting perimeters of triangles with a constant $0\leqslant \alpha < \frac{1}{4}$. Then $T$ is a generalized Chatterjea type mapping with respect to the metric $d$.
\end{prop}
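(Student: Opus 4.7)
The plan is to bound the ``input perimeter'' $d(x,y)+d(y,z)+d(x,z)$ appearing on the right-hand side of~(\ref{mcpt}) from above by the six-term mixed sum on the right-hand side of~(\ref{gctm}), and then chain this with the perimeter-contracting hypothesis.

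First I would fix three pairwise distinct points $x,y,z\in X$ and, for each edge of the triangle $xyz$, apply the triangle inequality using the image under $T$ of the opposite vertex as the intermediate point. That is,
\begin{align*}
d(x,y)&\leqslant d(x,Tz)+d(y,Tz),\\
d(y,z)&\leqslant d(y,Tx)+d(z,Tx),\\
d(x,z)&\leqslant d(x,Ty)+d(z,Ty).
\end{align*}
Summing these three inequalities reproduces \emph{exactly} the six terms on the right-hand side of~(\ref{gctm}), with each mixed distance appearing precisely once. This is the only step with any genuine content: the symmetric choice ``opposite vertex'' guarantees that no term is duplicated and no term is missed.

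Combining the above with the perimeter-contracting hypothesis~(\ref{mcpt}) yields
\begin{equation*}
d(Tx,Ty)+d(Ty,Tz)+d(Tx,Tz)\leqslant \alpha\bigl(d(x,Ty)+d(x,Tz)+d(y,Tx)+d(y,Tz)+d(z,Tx)+d(z,Ty)\bigr),
\end{equation*}
which is exactly~(\ref{gctm}) with constant $\gamma=\alpha$. Since $\alpha<\tfrac{1}{4}<\tfrac{1}{3}$, this $\gamma$ lies in $[0,\tfrac{1}{3})$, so $T$ meets the requirement of Definition~\ref{d2}.

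There is no serious obstacle; the argument is a direct application of the triangle inequality plus the hypothesis. I remark that this reasoning in fact requires only $\alpha<\tfrac{1}{3}$, so the stated bound $\alpha<\tfrac{1}{4}$ is stronger than what this particular implication needs — presumably it is chosen for compatibility with results such as Corollary~\ref{cor1}, which genuinely require $\gamma<\tfrac{1}{4}$.
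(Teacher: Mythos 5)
Your proof is correct, but it takes a genuinely different (and in fact sharper) route than the paper's. The paper bounds each side of the original triangle by a three-term chain through \emph{two} image points, e.g.\ $d(x,y)\leqslant d(x,Ty)+d(Ty,Tx)+d(Tx,y)$, and similarly for the other two sides; summing produces the six mixed distances \emph{plus} the image perimeter $d(Tx,Ty)+d(Ty,Tz)+d(Tx,Tz)$, which must then be absorbed into the left-hand side, yielding $\gamma=\frac{\alpha}{1-\alpha}$. That is precisely why the paper needs $\alpha<\frac14$: it is the exact threshold for $\frac{\alpha}{1-\alpha}<\frac13$. You instead route each side through a \emph{single} image point (the image of the opposite vertex), so the three triangle inequalities sum to exactly the six mixed terms with nothing left over, giving $\gamma=\alpha$ directly. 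Your decomposition buys a better constant and shows the implication already holds under the weaker hypothesis $\alpha<\frac13$; the paper's decomposition is the one that genuinely requires $\alpha<\frac14$ (your closing speculation about Corollary~\ref{cor1} is off the mark --- the bound is dictated by the paper's own estimate, not by downstream results). Both arguments are valid proofs of the stated proposition.
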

\begin{proof}
Applying several times triangle inequality to the right part of~(\ref{mcpt}) we get
\begin{equation*}
\begin{aligned}
    &d(Tx,Ty) + d(Ty,Tz) + d(Tx,Tz) \\
    &\leqslant \alpha (d(x,Ty) + d(Ty,Tx) + d(Tx,y) + d(y,Tz) \\
    &+ d(Tz,Ty) + d(Ty,z)+ d(z,Tx) + d(Tx,Tz) + d(Tz,x)).
\end{aligned}
\end{equation*}
Rearranging this inequality, we obtain
\begin{equation*}
\begin{aligned}
    &d(Tx, Ty) + d(Ty, Tz) + d(Tz, Tx) \\
    &\leq \frac{\alpha}{1-\alpha} \left(d(x, Ty) + d(x, Tz) + d(y, Tx) + d(y, Tz) + d(z, Tx) + d(z, Ty)\right)
\end{aligned}
\end{equation*}
for  all three pairwise distinct points $x, y, z\in X$.

Since $0\leqslant \alpha <\frac{1}{2}$ we get  $0 \leqslant \gamma = \frac{\alpha}{1 - \alpha} < \frac{1}{3}$. Thus, $T$ is a generalized Chatterjea type mapping.
\end{proof}

\begin{prop}
Let $(X, d)$ be a metric space with $|X| \geq 3$, and $T\colon X \to X$ be a generalized Kannan mapping with a constant $0\leqslant \lambda < \frac{2}{5}$. Then $T$ is a generalized Chatterjea type mapping with respect to the metric $d$.
\end{prop}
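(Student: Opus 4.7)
The plan is to derive the generalized Chatterjea inequality by bounding each self-distance $d(x,Tx)$, $d(y,Ty)$, $d(z,Tz)$ appearing on the right-hand side of the Kannan condition via the triangle inequality, then feeding the resulting bound back into the Kannan hypothesis and solving for the ``image sum'' $d(Tx,Ty)+d(Ty,Tz)+d(Tx,Tz)$. The critical point is that each self-distance admits \emph{two} triangle-inequality estimates that route through different image points, and averaging these two estimates produces a clean bound involving exactly the six cross-distances present in the Chatterjea expression.

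In more detail, I would first write, for pairwise distinct $x,y,z$,
\begin{align*}
2\,d(x,Tx) &\leqslant d(x,Ty) + d(Ty,Tx) + d(x,Tz) + d(Tz,Tx),\\
2\,d(y,Ty) &\leqslant d(y,Tx) + d(Tx,Ty) + d(y,Tz) + d(Tz,Ty),\\
2\,d(z,Tz) &\leqslant d(z,Tx) + d(Tx,Tz) + d(z,Ty) + d(Ty,Tz),
\end{align*}
and add these to obtain
\[
d(x,Tx)+d(y,Ty)+d(z,Tz) \leqslant \tfrac{1}{2}\,C + S,
\]
where $S := d(Tx,Ty)+d(Ty,Tz)+d(Tx,Tz)$ and $C := d(x,Ty)+d(x,Tz)+d(y,Tx)+d(y,Tz)+d(z,Tx)+d(z,Ty)$.

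Then I would substitute this estimate into the generalized Kannan inequality $S \leqslant \lambda\bigl(d(x,Tx)+d(y,Ty)+d(z,Tz)\bigr)$ to get $S \leqslant \lambda\bigl(\tfrac{1}{2}C + S\bigr)$, which rearranges to
\[
S \leqslant \frac{\lambda}{2(1-\lambda)}\,C.
\]
Finally, I would set $\gamma := \lambda/(2(1-\lambda))$ and verify that the hypothesis $0 \leqslant \lambda < 2/5$ is precisely what forces $\gamma \in [0,1/3)$: indeed $\gamma < 1/3$ is equivalent to $3\lambda < 2(1-\lambda)$, i.e.\ $5\lambda < 2$. There is no real obstacle here; the only mild subtlety is choosing the right pair of triangle-inequality detours so that the averaged bound involves each of the six cross-distances exactly once, which is what makes the constant work out sharply to $2/5$.
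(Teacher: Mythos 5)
Your proof is correct and is essentially the same argument as the paper's: the paper bounds each $d(w,Tw)$ by a triangle-inequality detour through an image point in two symmetric ways, obtaining two inequalities $S\leqslant\frac{\lambda}{1-\lambda}(d(x,Ty)+d(y,Tz)+d(z,Tx))$ and $S\leqslant\frac{\lambda}{1-\lambda}(d(y,Tx)+d(z,Ty)+d(x,Tz))$ and then averages them, whereas you average the six triangle inequalities first and substitute once — the same computation in a different order, yielding the identical constant $\gamma=\lambda/(2(1-\lambda))<1/3$.
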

\begin{proof}
Applying triangle inequality to the right part of~(\ref{gktm}), we get
\begin{equation*}
\begin{aligned}
    &d(Tx,Ty) + d(Ty,Tz) + d(Tx,Tz) \\
    &\leqslant \lambda (d(x,Ty) + d(Ty,Tx) + d(y,Tz) + d(Tz,Ty) \\
    &+ d(z,Tx) + d(Tx,Tz)).
\end{aligned}
\end{equation*}
Rearranging this inequality, we obtain
\begin{equation} \label{ff}
\begin{aligned}
    &d(Tx, Ty) + d(Ty, Tz) + d(Tz, Tx) \\
    &\leq \frac{\lambda}{1-\lambda} (d(x, Ty) + d(y, Tz) + d(z, Tx))
\end{aligned}
\end{equation}
for  all three pairwise distinct points $x, y, z\in X$.\\

Analogously, we can get the inequality
\begin{equation} \label{gg}
\begin{aligned}
    &d(Tx, Ty) + d(Ty, Tz) + d(Tz, Tx) \\
    &\leq \frac{\lambda}{1-\lambda} (d(y, Tx) + d(z, Ty) + d(x, Tz))
\end{aligned}
\end{equation}
for  all three pairwise distinct points $x, y, z\in X$.

Adding (\ref{ff}) and (\ref{gg}), we get
\begin{equation*}
\begin{aligned}
    &d(Tx, Ty) + d(Ty, Tz) + d(Tz, Tx) \\
    &\leq \frac{\lambda}{2(1-\lambda)} (d(x, Ty) + d(x,Tz) +d(y, Tx) + d(y, Tz)+ d(z, Tx)+d(z, Ty)).
\end{aligned}
\end{equation*}

Since $0\leqslant \lambda <\frac{2}{5}$ we get  $0 \leqslant \gamma = \frac{\lambda}{2(1 - \lambda)} < \frac{1}{3}$. Thus, $T$ is a generalized Chatterjea type mapping.
\end{proof}

\begin{ex}\label{exa4}
Let $(X,d)$ be a metric space such that $X=\{x,y,z\}$, $d(x,z)=d(y,z)=4$, $d(x,y)=1$ and let a mapping $T\colon X\to X$ be such that $Tx=x$, $Ty=y$ and $Tz=y$. It is clear that inequality~(\ref{e0}) does not hold for any $0\leqslant \gamma <\frac{1}{2}$:
$$
1\leqslant \gamma (1+1).
$$
Thus, $T$ is not a Chatterjea type mapping. But inequality~(\ref{gctm}) holds with, e.g., $\gamma=\frac{3}{11}<\frac{1}{3}$:
$$
1+0+1\leqslant \gamma(1+1+1+0+4+4).
$$
Thus, $T$ is a generalized Chatterjea type mapping.
\end{ex}

Example~\ref{exa4} shows that the class of generalized Chatterjea type mappings does not coincide with the class of Chatterjea type mappings. On the other hand Proposition~\ref{p2.1} states that Chatterjea  type mappings with $\gamma\in[0,\frac{1}{3})$ are generalized Chatterjea type mappings. This leads us to the following.

\textbf{Open problem.} Do there exist Chatterjea type mappings with $\gamma\in[\frac{1}{3}, \frac{1}{2})$ which are not generalized Chatterjea type mappings?

\section{Main results}\label{sec3}

Let $T$ be a mapping on the metric space $X$. A point $x\in X$ is called a \emph{periodic point of period $n$} if $T^n(x) = x$. The least positive integer $n$ for which $T^n(x) = x$ is called the prime period of $x$, see, e.g.,~\cite[p.~18]{De22}. In particular, the point $x$ is of prime period $2$ if $T^2(x)=x$ and $Tx\neq x$.

The following theorem is the main result of this paper:
\begin{thm}\label{t1}
Let $(X,d)$, $|X|\geqslant 3$, be a complete metric space and let the mapping $T\colon X\to X$ satisfy the following two conditions:
\begin{itemize}
  \item [(i)] $T$ does not possess periodic points of prime period $2$.
  \item [(ii)] $T$ is a generalized Chatterjea type mapping on $X$.
\end{itemize}
Then $T$ has a fixed point. The number of fixed points is at most two.
\end{thm}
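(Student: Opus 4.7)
The plan is to carry out a Picard iteration $x_{n+1}=Tx_n$ from an arbitrary $x_0\in X$, but working with three consecutive iterates at a time so that the three-point condition~(\ref{gctm}) can be invoked. Before applying~(\ref{gctm}), one needs $x_{n-1},x_n,x_{n+1}$ to be pairwise distinct: the cases $x_{n-1}=x_n$ or $x_n=x_{n+1}$ immediately produce a fixed point, while $x_{n-1}=x_{n+1}$ with $x_{n-1}\neq x_n$ would make $x_{n-1}$ a periodic point of prime period~$2$, contradicting hypothesis~(i). So unless a fixed point appears at some step of the iteration, three consecutive iterates are always pairwise distinct; this is the essential role of (i) throughout the proof.

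For the convergence step I would set $a_n=d(x_n,x_{n+1})$, $b_n=d(x_n,x_{n+2})$, and $P_n=a_n+a_{n+1}+b_n$ (the perimeter of the triangle $\{x_n,x_{n+1},x_{n+2}\}$). Applying~(\ref{gctm}) to the triple $(x_{n-1},x_n,x_{n+1})$ and simplifying (two of the six terms on the right are of the form $d(x_i,x_i)=0$), the inequality collapses to
\begin{equation*}
P_n \leq \gamma\bigl(b_{n-1}+d(x_{n-1},x_{n+2})+b_n+a_n\bigr).
\end{equation*}
Estimating the diagonal as $d(x_{n-1},x_{n+2})\leq b_{n-1}+a_{n+1}$ and rearranging gives $(1-\gamma)P_n\leq 2\gamma b_{n-1}$; since $b_{n-1}\leq a_{n-1}+a_n$ yields $2b_{n-1}\leq P_{n-1}$, one obtains $P_n\leq \tfrac{\gamma}{1-\gamma}P_{n-1}$. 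Because $\gamma<\tfrac{1}{3}$ the contraction factor is strictly less than $\tfrac{1}{2}$, so $P_n$ (and hence $a_n\leq P_n$) decays geometrically; a telescoping sum then shows $\{x_n\}$ is Cauchy, and completeness yields a limit $x^*\in X$.

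To identify $x^*$ as a fixed point I would apply~(\ref{gctm}) to the triple $(x_n,x_{n+1},x^*)$, which is pairwise distinct for all large $n$ (the alternative, $x_n=x^*$ infinitely often, gives $Tx^*=x_{n+1}\to x^*$ directly, since the constant value $Tx^*$ is forced to equal the sequence limit). Continuity of the metric lets me pass to the limit: the left-hand side tends to $2\,d(x^*,Tx^*)$, the right-hand side to $2\gamma\,d(x^*,Tx^*)$, and $\gamma<1$ forces $Tx^*=x^*$. For the bound on the number of fixed points, if three distinct fixed points $u,v,w$ existed, the inequality~(\ref{gctm}) would collapse to $d(u,v)+d(v,w)+d(u,w)\leq 2\gamma\bigl(d(u,v)+d(v,w)+d(u,w)\bigr)$, and $2\gamma<\tfrac{2}{3}<1$ contradicts positivity of the left-hand side.

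The step I expect to be the most delicate is the triangle-inequality juggling in the second paragraph: several bounds on the diagonal $d(x_{n-1},x_{n+2})$ are available, and only the choice $d(x_{n-1},x_{n+2})\leq b_{n-1}+a_{n+1}$ is compatible with re-absorbing the $P_n$ contribution on the left and leaving a remainder controlled by $P_{n-1}$ alone. It is also here that the threshold $\gamma<\tfrac{1}{3}$ (rather than the naively sufficient $\gamma<\tfrac{1}{2}$) earns its keep: one wants both $\gamma/(1-\gamma)<1$ for convergence and the cleaner bookkeeping that gives a geometric factor well below $1$, and the same $\gamma<\tfrac{1}{3}$ is then exactly what the three-fixed-point argument needs through the condition $2\gamma<1$.
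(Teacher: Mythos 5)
Your proof is correct, and it follows the same overall skeleton as the paper's: Picard iteration, pairwise distinctness of consecutive triples via condition (i), a Cauchy argument, identification of the limit as a fixed point, and the $S\leqslant 2\gamma S$ contradiction for three distinct fixed points (this last step is verbatim the paper's). Where you genuinely diverge is in the convergence estimate. The paper manipulates the same instance of (\ref{gctm}) applied to $(x_{n-1},x_n,x_{n+1})$ into the two-step bound $d(x_{n+1},x_{n+2})\leqslant \frac{2\gamma}{1-\gamma}\max\{d(x_{n-1},x_n),d(x_n,x_{n+1})\}$, which forces an interleaved bookkeeping with exponents $\alpha^{n/2-1}$ and a slightly awkward geometric-series summation. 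Your perimeter recursion $P_n\leqslant\frac{\gamma}{1-\gamma}P_{n-1}$ is a cleaner one-step contraction (with ratio below $\tfrac12$ rather than merely below $1$), it makes the role of the threshold $\gamma<\tfrac13$ transparent, and it echoes the "contracting perimeters" viewpoint of \cite{P23} that motivates the whole paper; I verified the index bookkeeping ($2b_{n-1}\leqslant P_{n-1}$ is exactly what absorbs the diagonal term) and it is sound. Your treatment of the limit point also differs slightly: you apply (\ref{gctm}) to $(x_n,x_{n+1},x^*)$ and dispose of the degenerate case by noting that $x_{n_k}=x^*$ infinitely often forces the constant $Tx^*=x_{n_k+1}$ to equal the limit $x^*$, which is simpler than the paper's argument that a repeated value would make the sequence cyclic and hence non-Cauchy; you also avoid the paper's (unnecessary) separate case $\gamma=0$, since $2d(x^*,Tx^*)\leqslant 2\gamma\, d(x^*,Tx^*)$ already forces $d(x^*,Tx^*)=0$ for every $\gamma\in[0,1)$.
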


\begin{proof}
Let $x_0\in X$, $Tx_0=x_1$, $Tx_1=x_2$, \ldots, $Tx_n=x_{n+1}$, \ldots. Suppose that $x_n$ is not a fixed point of the mapping $T$ for every $n=0,1,...$.
Since $x_{n-1}$ is not fixed, then $x_{n-1}\neq x_n=Tx_{n-1}$. By condition (i) $x_{n+1}=T(T(x_{n-1}))\neq x_{n-1}$ and by the supposition that $x_{n}$ is not fixed we have $x_n\neq x_{n+1}=Tx_n$. Hence, $x_{n-1}$, $x_n$ and $x_{n+1}$ are pairwise distinct. Let us set in~(\ref{gctm}) $x=x_{n-1}$, $y=x_n$, $z=x_{n+1}$.
Then
\begin{align*}
&d(Tx_{n-1},Tx_n)+d(Tx_n,T{x_{n+1}})+d(Tx_{n-1}, Tx_{n+1}) \\
&\leqslant \gamma(d(x_{n-1},Tx_{n})+d(x_{n-1},Tx_{n+1})+d(x_{n}, Tx_{n-1}) \\
&\quad+ d(x_{n},Tx_{n+1})+d(x_{n+1},Tx_{n-1})+d(x_{n+1}, Tx_{n}))
\end{align*}
and
\begin{align*}
&d(x_{n},x_{n+1})+d(x_{n+1},x_{n+2})+d(x_{n+2}, x_{n})\\
&\leqslant \gamma(d(x_{n-1},x_{n+1})+d(x_{n-1},x_{n+2})+d(x_{n},x_{n})\\
&+d(x_{n},x_{n+2})+d(x_{n+1},x_{n})+d(x_{n+1}, x_{n+1})).
\end{align*}
Hence,
\begin{align*}
d(x_{n+1},x_{n+2}) &\leqslant \gamma(d(x_{n-1},x_{n+1})+d(x_{n+2},x_{n-1})) +(\gamma-1)[d(x_{n+1},x_{n})+d(x_{n},x_{n+2})], \\
&\leqslant \gamma(d(x_{n-1},x_{n+1})+d(x_{n+2},x_{n-1}))-(1-\gamma)[d(x_{n+1},x_{n})+d(x_{n},x_{n+2})].
\end{align*}

Using the triangle inequality $d(x_{n+1}, x_{n+2})\leqslant d(x_n, x_{n+1})+d(x_{n+2},x_n)$, and in view of $-(1-\gamma)[d(x_n, x_{n+1})+d(x_{n+2},x_n)] \leqslant -(1-\gamma)d(x_{n+1}, x_{n+2})$, we get
$$
(2-\gamma)d(x_{n+1},x_{n+2})
\leqslant \gamma(d(x_{n-1},x_{n+1})+d(x_{n+2},x_{n-1})).
$$
In view of triangle inequality this further implies
\begin{align*}
(2-\gamma)d(x_{n+1},x_{n+2}) &\leqslant \gamma (d(x_{n-1},x_n)+d(x_n,x_{n+1})+d(x_{n+2},x_{n+1}) \\
&\quad+d(x_{n+1},x_{n})+ d(x_{n},x_{n-1})).
\end{align*}

Therefore
$$
(2-2\gamma)d(x_{n+1},x_{n+2})\leqslant 2\gamma(d(x_{n-1},x_n)+d(x_n,x_{n+1}))
$$
and
$$
d(x_{n+1},x_{n+2})\leqslant \frac{2\gamma}{1-\gamma} \max\{d(x_{n-1}, x_{n}),d(x_{n}, x_{n+1})\}.
$$
Let $\alpha = \frac{2\gamma}{1-\gamma}$. Using the relation $\gamma \in [0,\frac{1}{3})$, we get $\alpha \in [0,1)$. Further,
\begin{equation}\label{e2}
d(x_{n+1},x_{n+2})\leqslant \alpha \max\{d(x_{n-1}, x_{n}),d(x_{n}, x_{n+1})\}.
\end{equation}

Set $a_n=d(x_{n-1},x_n)$, $n=1,2,\ldots,$ and let
$a=\max\{a_{1},a_{2}\}$.
Hence and by~(\ref{e2}) we obtain
$$
a_1\leqslant a, \, \,
a_2\leqslant a, \, \,
a_3\leqslant \alpha a, \, \,
a_4\leqslant \alpha a, \, \,
a_5\leqslant \alpha^2 a, \, \,
a_6\leqslant \alpha^2 a, \,\,
a_7\leqslant \alpha^3 a, \,
\ldots .
$$
Since $\alpha <1$, it is clear that the inequalities
$$
a_1\leqslant a, \, \,
a_2\leqslant a, \, \,
a_3\leqslant \alpha^{\frac{1}{2}} a, \, \,
a_4\leqslant \alpha a, \, \,
a_5\leqslant \alpha^{\frac{3}{2}} a, \, \,
a_6\leqslant \alpha^2 a, \,\,
a_7\leqslant \alpha^{\frac{5}{2}} a, \,
\ldots
$$
also hold. That is,
\begin{equation}\label{e9}
a_n\leqslant \alpha^{\frac{n}{2}-1}a
\end{equation}
for $n=3,4,\ldots$.

Let $p\in \mathbb N$, $p\geqslant 2$. By the triangle inequality, for $n\geqslant 3$ we have
$$
d(x_n,\,x_{n+p})\leqslant d(x_{n},\,x_{n+1})+d(x_{n+1},\,x_{n+2})+\ldots+d(x_{n+p-1},\,x_{n+p})
$$
$$
=a_{n+1}+a_{n+2}+\cdots+a_{n+p} \leqslant
a(\alpha^{\frac{n+1}{2}-1}+\alpha^{\frac{n+2}{2}-1}+\cdots
+\alpha^{\frac{n+p}{2}-1})
$$
$$
=a\alpha^{\frac{n+1}{2}-1}(1+\alpha^{\frac{1}{2}}+\cdots
+\alpha^{\frac{p-1}{2}})=a\alpha^{\frac{n-1}{2}}\frac{1-\sqrt{\alpha^p}}{1-\sqrt{\alpha}}.
$$
Since by the supposition $0\leqslant\alpha<1$, then $0\leqslant \sqrt{\alpha^p}<1$ and $d(x_n,\,x_{n+p})\leqslant a\alpha^{\frac{n-1}{2}}\frac{1}{1-\sqrt{\alpha}}$. Hence, $d(x_n,\,x_{n+p})\to 0$ as $n\to \infty$ for every $p>0$. Thus, $\{x_n\}$ is a Cauchy sequence. By the completeness of $(X,d)$, this sequence has a limit $x^*\in X$.

Recall that any three consecutive element of the sequence $(x_n)$ are pairwise distinct. If $x^*\neq x_k$ for all $k\in \{1,2,...\}$, then inequality~(\ref{gctm}) holds for the pairwise distinct points  $x^*$, $x_{n-1}$ and $x_n$.
Suppose that there exists the smallest possible $k\in \{1,2,...\}$ such that $x^*=x_k$.  Let $m>k$ be such that $x^*=x_m$, then the sequence $(x_n)$ is cyclic starting from $k$ and can not be a Cauchy sequence. Hence, the points $x^*$, $x_{n-1}$ and $x_n$ are pairwise distinct at least when $n-1>k$.

Let us prove that $Tx^*=x^*$. If there exists $k\in \{1,2,...\}$ such that $x_k=x^*$, then suppose that $n-1>k$. By the triangle inequality and by inequality~(\ref{gctm}) we have
\begin{align*}
d(x^*,Tx^*) &\leqslant d(x^*,x_{n})+d(x_{n},Tx^*)\\
&=d(x^*,x_{n})+d(Tx_{n-1},Tx^*)\\
&\leqslant d(x^*,x_{n})+d(Tx_{n-1},Tx^*)+d(Tx_{n-1},Tx_{n})+d(Tx_{n},Tx^*)\\
&\leqslant d(x^*,x_{n})+\gamma(d(x_{n-1},Tx^*)+d(x_{n-1},Tx_{n})\\
&\quad+d(x^*,Tx_{n-1})+d(x^*,Tx_{n})+d(x_{n},Tx_{n-1})+d(x_{n},Tx^*))\\
&\leqslant d(x^*,x_{n})+\gamma(d(x_{n-1},Tx^*)+d(x_{n-1},x_{n+1})\\
&\quad+d(x^*,x_{n})+d(x^*,x_{n+1})+d(x_{n},x_{n})+d(x_{n},Tx^*)).
\end{align*}
Letting $n\to\infty$ we get the inequality
\begin{equation}\label{ew1}
d(x^*,Tx^*) \leqslant 2\gamma d(x^*,Tx^*).
\end{equation}
Since $\gamma\in[0,\frac{1}{3})$, we have $2\gamma\in[0,\frac{2}{3})$. Thus, for $\gamma \neq 0$ inequality~(\ref{ew1}) holds only if $d(x^*,Tx^*)$=0.  Hence, $x^*$ is the fixed point of $T$. Suppose now $\gamma=0$, then from~(\ref{gctm}) for pairwise distinct $x,y,z\in X$ we have
$$
d(Tx,Ty)+d(Ty,Tz)+d(Tx,Tz)=0.
$$
Suppose also that $x,y,z$ are not fixed. Then $Tx=Ty=Tz=t$, $t\neq x,y,z$.
From~(\ref{gctm}) for pairwise distinct $x,y,t\in X$ it follows also that
$$
d(Tx,Ty)+d(Ty,Tt)+d(Tx,Tt)=0.
$$
Hence, $Tx=Ty=Tt=t$ and $t$ is the fixed point of $T$.

Suppose that there exists at least three pairwise distinct fixed points $x$, $y$ and $z$.  Then $Tx=x$, $Ty=y$ and $Tz=z$ and it follows from~(\ref{gctm}) that
$$
d(x,y)+d(y,z)+d(x,z)\leqslant 2\gamma(d(x,y)+d(y,z)+d(x,z)),
$$
which is a contradiction for any $\gamma\in[0,\frac{1}{3})$.
\end{proof}

\begin{rem}
Suppose that under the supposition of the theorem the mapping $T$ has a fixed point $x^*$ which is a limit of some iteration sequence $x_0, x_1=Tx_0, x_2=Tx_1,\ldots$ such that $x_n\neq x^*$ for all $n=1,2,\ldots$. Then $x^*$ is a unique fixed point.
Indeed, suppose that $T$ has another fixed point $x^{**}\neq x^*$.
It is clear that $x_n\neq x^{**}$ for all $n=1,2,\ldots$. Hence, we have that the points $x^*$, $x^{**}$ and $x_n$ are pairwise distinct for all $n=1,2,\ldots$. Consider the ratio
\begin{align*}
R_n &= \frac{d(Tx^*,Tx^{**})+d(Tx^*,Tx_{n})+d(Tx^{**},Tx_{n})}
{d(x^*,Tx^{**})+d(x^*,Tx_n)+d(x^{**}, Tx^{*})+d(x^{**},Tx_n)+ d(x_n, Tx^{*})+d(x_n, Tx^{**})} \\
&= \frac{d(x^*,x^{**})+d(x^*,x_{n+1})+d(x^{**},x_{n+1})}
{d(x^*,x^{**})+d(x^*,x_{n+1})+d(x^{**}, x^{*})+d(x^{**},x_{n+1}
)+ d(x_n, x^{*})+d(x_n, x^{**})}.
\end{align*}

Taking into consideration that $d(x^*,x_{n+1})\to 0$, $d(x^{**},x_{n+1})\to d(x^{**},x^*)$ and $d(x_n,x^*)\to 0$, we obtain
$R_n\to \frac{1}{2}$ as $n\to \infty$, which contradicts to condition~(\ref{gctm}).
\end{rem}

\begin{defn}
Let $(X, d)$ be a metric space. A mapping $T\colon X \to X$ satisfying the condition
\begin{equation}\label{ar}
\lim_{n \to \infty} d(T^{n+1}(x), T^n(x)) = 0
\end{equation}
for all $x \in X$ is called asymptotically regular \cite{BP66}.
\end{defn}

\begin{rem}
Asymptotic regularity, valuable for expanding the range of applicable mappings across diverse parameter values, demonstrates limited utility in the case of generalized Chatterjea type mappings when contrasted with generalized Kannan type mappings \cite{EB23}, as evident from Corollary \ref{gkt1m}.
\end{rem}

The following proposition is nearly self-evident.
\begin{prop}\label{prop1}
Let $X$ be a finite nonempty metric space and let $T\colon X\to X$ be a self-mapping. It $T$ is asymptotically regular, then $T$ does not possess periodic points of prime period $n\geqslant 2$.
\end{prop}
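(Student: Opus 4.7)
The plan is a direct proof by contradiction that barely uses the metric structure at all. Suppose $T$ admits a periodic point $x \in X$ of prime period $n \geqslant 2$, so that $T^n x = x$ while $Tx \neq x$ (otherwise $x$ would be fixed, i.e.\ of prime period $1$). I would then study the real sequence
$$a_k := d(T^{k+1} x, T^k x), \qquad k = 0, 1, 2, \ldots,$$
and aim to show that it cannot tend to zero, contradicting the asymptotic regularity of $T$ at the point $x$.

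The key observation is that the orbit of $x$ becomes cyclic: from $T^n x = x$ one gets $T^{k+n} x = T^k x$ for every $k \geqslant 0$, and therefore $a_{k+n} = a_k$ for every $k \geqslant 0$. Thus $(a_k)$ is an $n$-periodic sequence of nonnegative reals. Since $n$ is the prime period, $Tx \neq x$, whence $a_0 = d(Tx, x) > 0$. Periodicity then yields $a_{jn} = a_0 > 0$ for all $j \geqslant 0$, so the subsequence $(a_{jn})_{j\geqslant 0}$ is bounded below by the positive constant $a_0$, and in particular $a_k \not\to 0$. This contradicts condition~(\ref{ar}) applied at $x$, finishing the proof.

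There is essentially no obstacle here: the argument is purely formal and, I note in passing, does not even require $X$ to be finite; the hypothesis only guarantees that the orbit $\{T^k x\}$ consists of at most $|X|$ distinct points, which is not needed for the contradiction. The finiteness assumption is presumably kept because the proposition is to be combined with other results in the paper where finiteness plays a role.
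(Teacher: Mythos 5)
Your proof is correct and is precisely the argument the paper leaves out (the authors simply declare the proposition ``nearly self-evident'' and give no proof): the orbit of a point of prime period $n\geqslant 2$ makes the displacement sequence $d(T^{k+1}x,T^kx)$ $n$-periodic with the positive value $d(Tx,x)$ recurring, so it cannot tend to zero. Your observation that finiteness of $X$ is not actually needed is also accurate.
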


This proposition and Theorem~\ref{t1} immediately lead to the following.

\begin{cor} \label{gkt1m}
Let $(X, d)$ be a complete metric space with $|X| \geqslant 3$ and let the mapping
$T\colon X \to X$ be asymptotically regular generalized Chatterjea type mapping. Then $T$ has a fixed point. The number of fixed points is at most two.
\end{cor}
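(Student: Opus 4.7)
The plan is to reduce the statement to Theorem~\ref{t1} by verifying its two hypotheses for an asymptotically regular generalized Chatterjea type mapping. Hypothesis (ii) is part of the standing assumption of the corollary, so the only genuine task is to establish hypothesis (i), namely that $T$ admits no periodic point of prime period~$2$.

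To handle (i), I would argue by contradiction in the spirit of Proposition~\ref{prop1}. Suppose some $x\in X$ were a periodic point of prime period $2$, so that $T^2 x = x$ and $Tx\neq x$. Then the iterates alternate: $T^{2k}x = x$ and $T^{2k+1}x = Tx$ for every $k\geqslant 0$. Consequently $d(T^{n+1}x, T^n x)$ is constantly equal to the positive number $d(Tx,x)$, which cannot tend to $0$, contradicting the asymptotic regularity condition~(\ref{ar}). This is precisely Proposition~\ref{prop1} specialised to prime period $2$; although that proposition is stated for finite metric spaces, the argument works verbatim in an arbitrary metric space, since a positive constant sequence never converges to $0$, so no finiteness is needed for the period-$2$ case we actually use.

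With both hypotheses of Theorem~\ref{t1} in place, I would simply invoke that theorem to conclude that $T$ has a fixed point and that the number of fixed points is at most two. I do not anticipate any substantive obstacle: the role of asymptotic regularity here is exactly to supply condition (i), after which Theorem~\ref{t1} delivers the existence and the cardinality bound with no further work.
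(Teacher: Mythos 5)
Your proposal is correct and follows essentially the same route as the paper, which likewise deduces the corollary from Proposition~\ref{prop1} together with Theorem~\ref{t1}. Your explicit remark that the period-$2$ case of Proposition~\ref{prop1} needs no finiteness assumption (since $d(T^{n+1}x,T^nx)$ would be a positive constant) is a worthwhile clarification, as the paper states that proposition only for finite spaces while applying it to an arbitrary complete space.
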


Chatterjea type mappings are continuous at fixed points~\cite{Rh88}. The following proposition demonstrates that generalized Chatterjea type mappings also possess this property.

\begin{prop}
Generalized Chatterjea type mapping are continuous at fixed points.
\end{prop}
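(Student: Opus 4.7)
The plan is to verify sequential continuity at a fixed point $x^*$ of $T$. If $x^*$ is an isolated point of $X$, continuity is automatic, so one may focus on the case where $x^*$ is an accumulation point. The main obstacle is that inequality~(\ref{gctm}) requires three pairwise distinct points, so one cannot simply plug in $x_n, x^*, x^*$; I would overcome this by introducing an auxiliary sequence to serve as the third point.

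Given an arbitrary sequence $x_n \to x^*$ with $x_n \neq x^*$, I would choose, for each $n$, a point $y_n \in X \setminus \{x^*\}$ with $d(y_n, x^*) < \frac{1}{2} d(x_n, x^*)$; such $y_n$ exists because $x^*$ is an accumulation point, and the strict inequality together with the reverse triangle inequality forces $y_n \neq x_n$. Hence $x_n, y_n, x^*$ are pairwise distinct for every $n$ and $y_n \to x^*$.

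Applying~(\ref{gctm}) to the triple $(x_n, y_n, x^*)$ and using $Tx^* = x^*$, write $a_n = d(Tx_n, x^*)$ and $b_n = d(Ty_n, x^*)$. The two mixed terms on the right-hand side are controlled by the triangle inequality through $x^*$: $d(x_n, Ty_n) \leqslant d(x_n, x^*) + b_n$ and $d(y_n, Tx_n) \leqslant d(y_n, x^*) + a_n$. Dropping the nonnegative term $d(Tx_n, Ty_n)$ on the left and substituting these bounds reduces the inequality to
$$(1 - 2\gamma)(a_n + b_n) \leqslant 2\gamma\bigl(d(x_n, x^*) + d(y_n, x^*)\bigr).$$
Since $\gamma \in [0, \frac{1}{3})$, the factor $1 - 2\gamma$ is strictly positive, so $a_n + b_n \to 0$ and in particular $Tx_n \to x^* = Tx^*$, establishing continuity. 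The only delicate step is the construction of the auxiliary sequence $y_n$ guaranteeing three pairwise distinct points; everything after that is a direct computation.
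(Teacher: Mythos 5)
Your proof is correct, and the core computation is the same as the paper's: apply inequality~(\ref{gctm}) to a triple containing the fixed point $x^*$ and two points converging to it, bound the mixed terms on the right by the triangle inequality through $x^*$, and absorb the resulting $2\gamma(a_n+b_n)$ into the left-hand side using $\gamma<\frac13$ to get a bound with factor $\frac{2\gamma}{1-2\gamma}$. Where you genuinely differ is in how you satisfy the pairwise-distinctness requirement. The paper takes the triple $(x^*,x_n,x_{n+1})$ built from consecutive terms of the given sequence, which forces it to first assume $x_n\neq x_{n+1}$ and $x_n\neq x^*$ and then patch up the general case with a somewhat fussy argument about deleting repeated elements and occurrences of $x^*$ and reinserting them afterwards. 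You instead split on whether $x^*$ is isolated (where continuity is trivial) or an accumulation point, and in the latter case manufacture an auxiliary point $y_n$ with $d(y_n,x^*)<\frac12 d(x_n,x^*)$, which automatically makes $(x_n,y_n,x^*)$ pairwise distinct for every $n$ with $x_n\neq x^*$; the indices with $x_n=x^*$ contribute nothing. This buys you a cleaner, case-free conclusion at the cost of the (entirely standard) isolated-versus-accumulation dichotomy, and it additionally gives $Ty_n\to x^*$ for free. Both arguments are sound; yours is the tidier write-up.
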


\begin{proof}
Let $(X,d)$ be a metric space with $|X|\geqslant 3$, $T\colon X\to X$ be a \emph{generalized Chatterjea type mapping} and $x^*$ be a fixed point of $T$. Let $(x_n)$ be a sequence such that $x_n\to x^*$, $x_{n}\neq x_{n+1}$ and $x_{n}\neq x^*$ for all $n$. Let us show that $Tx_n\to Tx^*$.
By~(\ref{gctm}), we have
\begin{align*}
\begin{split}
    d(Tx^*,Tx_n)+&d(Tx_n,Tx_{n+1})+d(Tx_{n+1},Tx^*) \\
    &\leqslant \gamma \bigl(d(x^*,Tx_n)+d(x^*,Tx_{n+1})+d(x_n,Tx^*) \\
    &\quad+d(x_{n},Tx_{n+1})+d(x_{n+1},Tx^*)+d(x_{n+1},Tx_{n})\bigr).
\end{split}
\end{align*}
Hence,
\begin{align*}
  \begin{split}
    d(Tx^*,Tx_n)+&d(Tx_{n+1},Tx^*) \\
    &\leqslant \gamma \bigl(d(x^*,Tx_n)+d(x^*,Tx_{n+1})+d(x_n,Tx^*) \\
    &\quad+d(x_{n},Tx_{n+1})+d(x_{n+1},Tx^*)+d(x_{n+1},Tx_{n})\bigr).
  \end{split}
  \end{align*}
By the triangle inequality we have
\begin{align*}
d(Tx^*,Tx_n)+&d(Tx_{n+1},Tx^*) \\
&\leqslant \gamma \bigl(d(x^*,Tx_n)+d(x^*,Tx_{n+1})+d(x_n,x^*) \\
&\quad+d(x_n,x^*)+d(x^*,Tx_{n+1})+d(x_{n+1},x^*) \\
&\quad+d(x_{n+1},x^*)+d(x^*,Tx_{n})\bigr)\\
&= \gamma \bigl(d(Tx^*,Tx_n)+d(Tx^*,Tx_{n+1})+d(x_n,x^*) \\
&\quad+d(x_n,x^*)+d(Tx^*,Tx_{n+1})+d(x_{n+1},x^*) \\
&\quad+d(x_{n+1},x^*)+d(Tx^*,Tx_{n})\bigr).
\end{align*}
Further,
 $$
   d(Tx^*,Tx_n)+d(Tx_{n+1},Tx^*) \leqslant \frac{2\gamma}{1-2\gamma}(d(x_n,x^*)+d(x_{n+1},x^*)).
$$
Since $d(x_n,x^*)\to 0$ and $d(x_{n+1},x^*)\to 0$ we have $$d(Tx^*,Tx_n)+d(Tx_{n+1},Tx^*) \to 0$$ and, hence, $d(Tx^*,Tx_n) \to 0$.

Let now $(x_n)$ be a sequence such that $x_n\to x^*$, and
$x_{n}\neq x^*$ for all $n$, but $x_{n} = x_{n+1}$ is possible. Let $(x_{n_k})$ be a subsequence of $(x_n)$ obtained by deleting corresponding repeating elements of $(x_n)$, i.e.,  such that $x_{n_k}\neq x_{n_{k+1}}$ for all $k$. It is clear that $x_{n_k}\to x^*$. As was just proved $Tx_{n_k}\to Tx^*=x^*$. The difference between $Tx_{n_k}$ and $Tx_{n}$ is that $Tx_{n}$ can be obtained from $Tx_{n_k}$ by inserting corresponding repeating consecutive elements. Hence, it is easy to see that $Tx_{n}\to Tx^*$.

Let $(x_n)$ be a sequence such that $x_n=x^*$ for all $n>N$, where $N$ is some natural number. Then, clearly, $Tx_{n}\to Tx^*$.
Let $(x_n)$ now be an arbitrary sequence such that $x_n\to x^*$ but not like in the previous case. Consider a subsequence  $(x_{n_k})$ obtained from $(x_n)$ by deleting elements $x^*$ (if they exist). Clearly, $x_{n_k}\to x^*$. It was just shown that such $Tx_{n_k}\to Tx^*$. Again, we see that $Tx_n$ can be obtained from $Tx_{n_k}$ by inserting in some places elements $Tx^*=x^*$. Again, it is easy to see that $Tx_{n}\to Tx^*$.
\end{proof}

\section{Fixed point theorem in incomplete metric space}\label{sec5}

In the following theorem, we exclude the completeness requirement of the metric space, compare with Theorem 1 from~\cite{Ka69}.

\begin{thm}\label{t2}
Let $(X,d)$, $|X|\geqslant 3$, be a metric space and let the mapping $T\colon X\to X$ satisfy the following four conditions:
\begin{itemize}
  \item [(i)] $T$ does not possess periodic points of prime period $2$.
  \item [(ii)] $T$ is a generalized Chatterjea  type mapping on $X$.
  \item [(iii)] $T$ is continuous at $x^*\in X$.
  \item [(iv)] There exists a point $x_0\in X$ such that the sequence of iterates $x_n=Tx_{n-1}$, $n=1,2,...$, has a subsequence $x_{n_k}$, converging to $x^*$.
\end{itemize}
Then $x^*$ is a fixed point of $T$. The number of fixed points is at most two.
\end{thm}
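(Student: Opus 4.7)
My plan is to adapt the proof of Theorem~\ref{t1} to the incomplete setting, using continuity at $x^*$ (condition (iii)) and the convergent subsequence (condition (iv)) as substitutes for the completeness hypothesis that is no longer available. First I would dispose of the degenerate case: if some iterate $x_N$ happens to be a fixed point of $T$, then $x_n=x_N$ for all $n\geqslant N$, every subsequence of $(x_n)$ converges only to $x_N$, and condition (iv) forces $x^*=x_N$, which is the required fixed point.

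Assume henceforth that no $x_n$ is fixed. Using only conditions (i) and (ii), the argument from Theorem~\ref{t1} goes through verbatim to give, first, that three consecutive iterates $x_{n-1},x_n,x_{n+1}$ are pairwise distinct, and then, after substitution into~(\ref{gctm}) and a rearrangement via the triangle inequality,
\[
d(x_{n+1},x_{n+2}) \leqslant \alpha\,\max\{d(x_{n-1},x_n),\,d(x_n,x_{n+1})\},\qquad \alpha=\frac{2\gamma}{1-\gamma}\in[0,1).
\]
Iterating yields the geometric estimate $a_n:=d(x_{n-1},x_n)\leqslant \alpha^{n/2-1}a$ for $n\geqslant 3$, where $a=\max\{a_1,a_2\}$. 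From this chain I retain only the one fact I actually need: $d(x_n,x_{n+1})\to 0$. I deliberately do \emph{not} push on to the Cauchy estimate, since completeness is not at my disposal.

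The identification $Tx^*=x^*$ is then a short two-limit argument. By (iv), $x_{n_k}\to x^*$; by continuity of $T$ at $x^*$ I get $T x_{n_k}=x_{n_k+1}\to Tx^*$. On the other hand, $d(x_{n_k},x_{n_k+1})\to 0$ combined with $x_{n_k}\to x^*$ gives $x_{n_k+1}\to x^*$ as well. Uniqueness of limits in the metric space $X$ forces $Tx^*=x^*$.

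The bound of at most two fixed points then transfers verbatim from the end of the proof of Theorem~\ref{t1}: three pairwise distinct fixed points $x,y,z$ would, via (\ref{gctm}) with $Tx=x$, $Ty=y$, $Tz=z$, satisfy $d(x,y)+d(y,z)+d(x,z)\leqslant 2\gamma\bigl(d(x,y)+d(y,z)+d(x,z)\bigr)$, which is impossible since $2\gamma<2/3<1$. The main obstacle, compared with Theorem~\ref{t1}, is really just replacing the completeness-plus-Cauchy step by the subsequence-plus-continuity-plus-$d(x_n,x_{n+1})\to 0$ route described above; every other ingredient is inherited unchanged.
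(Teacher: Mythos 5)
Your proposal is correct and follows essentially the same route as the paper: both rely on the geometric decay estimate $a_n\leqslant\alpha^{n/2-1}a$ inherited from Theorem~\ref{t1} to get $d(x_{n_k},x_{n_k+1})\to 0$, then combine this with continuity at $x^*$ and the convergent subsequence to force $Tx^*=x^*$ (the paper phrases this step as a contradiction using two disjoint balls around $x^*$ and $Tx^*$, while you invoke uniqueness of limits directly, but these are the same argument). Your explicit treatment of the degenerate case where some iterate is already fixed is a small point the paper leaves implicit, and is a welcome addition.
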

\begin{proof}
Since $T$ is continuous at $x^*$  and $x_{n_k} \to x^*$ we have $Tx_{n_k}=x_{n_{k}+1} \to Tx^*$. Note that $x_{n_{k}+1}$ is a subsequence of $x_n$ but not obligatory the subsequence of $x_{n_k}$. Suppose $x^*\neq Tx^*$. Consider two balls $B_1=B_1(x^*,r)$ and $B_2=B_2(Tx^*,r)$, where $r<\frac{1}{3}d(x^*, Tx^*)$. Consequently, there exists a positive integer $N$ such that $i>N$ implies
$$
x_{n_i} \in B_1 \, \text{ and } \, x_{n_i+1} \in B_2.
$$
Hence,
\begin{equation}\label{w4}
d(x_{n_i}, x_{n_i+1})>r
\end{equation}
for $i>N$.

If the sequence $x_n$ does not contain a fixed point of the mapping $T$, then we can apply considerations of Theorem~\ref{t1}.
By~(\ref{e9}) for $n=3,4,\ldots$ we have
$$
d(x_{n-1},x_n)\leqslant \alpha^{\frac{n}{2}-1}a,
$$
where $a=\max\{d(x_{0},x_{1}),d(x_{1},x_{2})\}$ and $\alpha=2\gamma/(1-\gamma)\in [0,1)$.
Hence,
$$
d(x_{n_i},x_{n_i+1})\leqslant \alpha^{\frac{n_i+1}{2}-1}a.
$$
But the last expression approaches $0$ as $i\to \infty$ which contradicts to~(\ref{w4}). Hence, $Tx^*=x^*$.

The existence of at most two fixed points follows form the last paragraph of Theorem~\ref{t1}.
\end{proof}

In the following theorem we suppose that $T$ is a generalized Chatterjea  type mapping not on $X$ but on everywhere dense subset of $X$ and suppose that $T$ is continuous on $X$ but not only at the point $x^*$, compare with Theorem 2 from~\cite{Ka69}.

\begin{thm}\label{t3}
Let $(X,d)$, $|X|\geqslant 3$, be a metric space and let the mapping $T\colon X\to X$ be continuous. Suppose that
\begin{itemize}
  \item [(i)] $T$ does not possess periodic points of prime period $2$.
  \item [(ii)] $T$ is a generalized Chatterjea type mapping on $(M,d)$, where $M$ is an everywhere dense subset of $X$.
  \item [(iii)] There exists a point $x_0\in X$ such that the sequence of iterates $x_n=Tx_{n-1}$, $n=1,2,...$, has a subsequence $x_{n_k}$, converging to $x^*$.
\end{itemize}
Then $x^*$ is a fixed point of $T$. The number of fixed points is at most two.
\end{thm}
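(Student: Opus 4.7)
The plan is to reduce Theorem~\ref{t3} to Theorem~\ref{t2} by showing that, under the hypothesis that $T\colon X\to X$ is continuous on all of $X$, the generalized Chatterjea type inequality~(\ref{gctm}) in fact holds on the entire space $X$ with the same constant $\gamma\in[0,\tfrac{1}{3})$, not only on the dense subset $M$.

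To carry out this extension, fix any pairwise distinct $x,y,z\in X$ and set $\rho=\tfrac{1}{3}\min\{d(x,y),d(y,z),d(x,z)\}>0$. By density of $M$ in $X$, for each $n\in\NN$ I choose $u_n,v_n,w_n\in M$ with $d(u_n,x)$, $d(v_n,y)$ and $d(w_n,z)$ all strictly less than $\min\{\rho,1/n\}$. The choice of $\rho$ together with the triangle inequality forces $u_n,v_n,w_n$ to be pairwise distinct, so assumption~(ii) applies to the triple $(u_n,v_n,w_n)$ and yields~(\ref{gctm}) for it. Continuity of $T$ on $X$ gives $Tu_n\to Tx$, $Tv_n\to Ty$, $Tw_n\to Tz$, and the joint continuity of the metric $d$ then lets me pass to the limit on both sides, yielding~(\ref{gctm}) for $(x,y,z)$.

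With~(\ref{gctm}) now available on all of $X$, each hypothesis of Theorem~\ref{t2} is verified: (i) is identical; (ii) has just been established; (iii) continuity of $T$ at $x^*$ is a consequence of continuity of $T$ on $X$; and (iv) coincides with our hypothesis~(iii). Invoking Theorem~\ref{t2} then produces the desired conclusion, namely that $x^*$ is a fixed point of $T$ and that the number of fixed points is at most two. The only mildly delicate step in the argument is the simultaneous density approximation with preservation of pairwise distinctness, and this is handled cleanly by taking the approximation radius strictly below $\rho$.
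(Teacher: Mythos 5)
Your proposal is correct and follows essentially the same route as the paper: both reduce to Theorem~\ref{t2} by extending inequality~(\ref{gctm}) from the dense subset $M$ to all of $X$ via approximation from $M$ and continuity of $T$. Your execution is in fact a bit cleaner --- you approximate all three points simultaneously (with the $\rho$-safeguard for pairwise distinctness) and pass to the limit directly by joint continuity of the metric, whereas the paper splits into three cases according to how many of $x,y,z$ lie outside $M$ and carries out explicit triangle-inequality estimates, arriving at the same conclusion.
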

\begin{proof}
The proof will follow from Theorem~\ref{t2}, if we can show that $T$ is a generalized Chatterjea type mapping on $X$. Let $x, y, z$ be any three pairwise distinct points of $X$ such that  $x, y \in M$, $z\in X \setminus M$ and  let $(c_n)$ be a sequence in $M$ such that $c_n\to z$, $c_n\neq x$, $c_n\neq y$ for all $n$ and $c_i\neq c_j$, $i\neq j$. Then
$$
 d(Tx,Ty)+d(Ty,Tz)+d(Tx,Tz) \leqslant
$$
$$
d(Tx,Ty)+d(Ty,Tc_n)+d(Tc_n,Tz)+d(Tx,Tc_n)+d(Tc_n,Tz)
$$
$$
\leqslant \gamma(d(x,Ty)+d(x,Tc_n)+d(y,Tx)+ d(y,Tc_n)+d(c_n,Tx)+d(c_n,Ty))+2d(Tc_n,Tz)
$$
(in view of the inequalities
\begin{equation} \label{s1}
\begin{aligned}
d(x,Tc_n) &\leqslant d(x,Tz)+d(Tz,Tc_n),\\
d(y,Tc_n) &\leqslant d(y,Tz)+d(Tz,Tc_n),\\
d(c_n,Tx) &\leqslant d(c_n,z)+d(z,Tx),\\
d(c_n,Ty) &\leqslant d(c_n,z)+d(z,Ty).
\end{aligned}
\end{equation}
we get)
\begin{align*}
&\leqslant \gamma(d(x,Ty)+d(x,Tz)+d(y,Tx)+d(y,Tz) \\
&\quad + d(z,Tx)+d(z,Ty))+2\gamma d(c_n,z)+ (2+\gamma)d(Tz,Tc_n).
\end{align*}

Letting $n\to \infty$, we get $d(c_n,z)\to 0$ and $d(Tc_n,Tz)\to 0$. Hence, inequality~(\ref{gctm}) follows.

Let now  $x \in M$, $y, z\in X \setminus M$, and let $(b_n), (c_n)$ be sequences in $M$ such that $b_n\to y$ and $c_n\to z$. (Here and below we consider that the points $x, y, z$ and all elements of sequences converging to these points are pairwise distinct.) Then
\begin{align*}
&d(Tx,Ty)+d(Ty,Tz)+d(Tx,Tz) \\
&\leqslant d(Tx,Tb_n)+d(Tb_n,Ty)+d(Ty,Tb_n)+d(Tb_n,Tc_n)+d(Tc_n,Tz) \\
&+d(Tx,Tc_n)+d(Tc_n,Tz) \\
&\leqslant \gamma(d(x,Tb_n)+d(x,Tc_n)+d(b_n,Tx)+d(b_n,Tc_n)
 +d(c_n,Tx)+d(c_n, Tb_n))\\ &\quad+ 2d(Tb_n,Ty) + 2d(Tc_n,Tz).
\end{align*}

(in view of the inequalities
\begin{equation} \label{s2}
\begin{aligned}
d(x,Tb_n) &\leqslant d(x,Ty)+d(Ty,Tb_n),\\
d(x,Tc_n) &\leqslant d(x,Tz)+d(Tz,Tc_n),\\
d(b_n,Tx) &\leqslant d(b_n,y)+d(y,Tx),\\
d(b_n,Tc_n) &\leqslant d(b_n,y)+d(y,Tz)+d(Tz,Tc_n),\\
d(c_n,Tx) &\leqslant d(c_n,z)+d(z,Tx), \\
d(c_n,Tb_n) &\leqslant d(c_n,z)+d(z,Ty)+d(Ty,Tb_n).
\end{aligned}
\end{equation}
we get)

\begin{align*}
&\leqslant \gamma(d(x,Ty)+d(x,Tz)+d(y,Tx)+d(y,Tz) \\
&\quad + d(z,Tx)+d(z,Ty)) + 2(1+\gamma)d(Tb_n,Ty) + 2(1+\gamma)d(Tc_n,Tz) \\
&+ 2\gamma(d(b_n,y)+d(c_n,z)).
\end{align*}

Again, letting $n\to \infty $, we get inequality~(\ref{gctm}).

Let now $x, y, z\in X \setminus M$, and let $(a_n), (b_n)$ and $(c_n)$ be sequences in $M$ such that $a_n\to x$, $b_n\to y$ and $c_n\to z$.
Then
$$
d(Tx,Ty)+d(Ty,Tz)+d(Tx,Tz)
$$
$$
\leqslant d(Tx,Ta_n)+d(Ta_n,Tb_n)+d(Tb_n,Ty)
$$
$$
+d(Ty,Tb_n)+d(Tb_n,Tc_n)+d(Tc_n,Tz)
$$
$$
+d(Tx,Ta_n)+d(Ta_n,Tc_n)+d(Tc_n,Tz)
$$
$$
\leqslant \gamma(d(a_n,Tb_n)+d(a_n,Tc_n)+d(b_n,Ta_n)+d(b_n,Tc_n)+d(c_n,Ta_n)+d(c_n,Tb_n))
$$
$$
+2d(Ta_n,Tx)
+2d(Tb_n,Ty)
+2d(Tc_n,Tz)
$$
(using the inequalities
\begin{align*}
d(a_n,Tb_n) &\leqslant d(a_n,x) + d(x,Ty) + d(Ty,Tb_n) \\
d(a_n,Tc_n) &\leqslant d(a_n,x) + d(x,Tz) + d(Tz,Tc_n) \\
d(b_n,Ta_n) &\leqslant d(b_n,y) + d(y,Tx) + d(Tx,Ta_n) \\
d(b_n,Tc_n) &\leqslant d(b_n,y) + d(y,Tz) + d(Tz,Tc_n) \\
d(c_n,Ta_n) &\leqslant d(c_n,z) + d(z,Tx) + d(Tx,Ta_n) \\
d(c_n,Tb_n) &\leqslant d(c_n,z) + d(z,Ty) + d(Ty,Tb_n)
\end{align*}

we get)
$$
\leqslant \gamma(d(x,Ty)+d(x,Tz)+d(y,Tx)+d(y,Tz)+d(z,Tx)+d(z,Ty))
$$
$$
+2(1+\gamma)d(Ta_n,Tx)
+2(1+\gamma)d(Tb_n,Ty)
+2(1+\gamma)d(Tc_n,Tz)
$$
$$
+2\gamma(
d(a_n,x)+ d(b_n,y)+d(c_n,z)).
$$
Again, letting $n\to \infty$, we get inequality~(\ref{gctm}). Hence, $T$ is a generalized Chatterjea  type mapping on $X$, which completes the proof.
\end{proof}











\end{document}